\patchcmd{\thmhead}{(#3)}{#3}{}{}
\newtheorem{theorem}{Theorem}[section]
\newtheorem{definition}[theorem]{Definition}
\newtheorem{lemma}[theorem]{Lemma}
\newtheorem{corollary}[theorem]{Corollary}
\theoremstyle{definition}
\newtheorem{remark}[theorem]{Remark}
\newcommand{\1}{\mathbf{1}}
\newcommand{\N}{\mathbb{N}}
\newcommand{\C}{\mathbb{C}}
\newcommand{\T}{\mathbb{T}}
\DeclareFontFamily{U}{wncy}{}
\DeclareFontShape{U}{wncy}{m}{n}{<->wncyr10}{}
\DeclareSymbolFont{mcy}{U}{wncy}{m}{n}
\DeclareMathSymbol{\Sh}{\mathord}{mcy}{"58}
\begin{document}
\title[]{Some remarks on non-symmetric polarization}

\author{Felipe Marceca}
\address{Departamento de Matem\'{a}tica - Pab I,
	Facultad de Cs. Exactas y Naturales, Universidad de Buenos Aires,
	(1428) Buenos Aires, Argentina, and CONICET-IMAS}
\thanks{This work has been supported by CONICET-PIP 11220130100329CO, ANPCyT PICT 2015-2299, UBACyT 20020130100474BA and a CONICET doctoral fellowship.}


\begin{abstract}
Let $P:\mathbb{C}^n\rightarrow \mathbb{C}$ be an $m$-homogeneous polynomial given by
\[P(x)= \sum_{1\leq j_1\leq \ldots \leq j_m \leq n} c_{j_1 \ldots j_m} x_{j_1}\ldots x_{j_m}.\]
Defant and Schl\"uters defined a non-symmetric associated $m$-form $L_P: \left(\mathbb{C}^n \right)^m\rightarrow \mathbb{C}$ by
\[L_P \left(x^{(1)},\ldots,x^{(m)} \right)= \sum_{1\leq j_1\leq \ldots \leq j_m \leq n}  c_{j_1  \ldots j_m} x_{j_1}^{(1)}\ldots x_{j_m}^{(m)}.\]
They estimated the norm of $L_P$ on $(\mathbb{C}^n, \| \cdot\|)^m$ by  the  norm  of $P$ on $(\mathbb{C}^n, \| \cdot\|)$ 
times a $(c\log n)^{m^2}$ factor for every 1-unconditional norm $\|\cdot\|$ on $\mathbb{C}^n$.
A symmetrization procedure based on a card-shuffling algorithm which (together with Defant and Schl\"uters' argument) 
brings the constant term down to $(c m \log n)^{m-1}$ is provided. 
Regarding the lower bound, it is shown that the optimal constant is bigger than $(c \log n)^{m/2}$ when $n\gg m$.
Finally, the case of $\ell_p$-norms $\|\cdot \|_p$ with $1\leq p <2$ is addressed.
\end{abstract}

\maketitle

\thispagestyle{empty}

\section{Introduction}

Let $P:\mathbb{C}^n\rightarrow \mathbb{C}$ be an $m$-homogeneous polynomial.
It is well-known that there is a unique symmetric $m$-linear form $B:(\mathbb{C}^n)^m\rightarrow \mathbb{C}$, such that $B(x,\ldots,x)=P(x)$ for all $x\in \C$.
Moreover, the \textit{polarization formula} gives an expression for the $m$-linear form $B$ in terms of $P$ (see e.g. \cite[Section 1.1]{din}).
In fact, for every $x^{(1)},\ldots,x^{(m)}\in\C$, we have
\[B\left(x^{(1)},\ldots,x^{(m)}\right)=\frac{1}{2^m m!} \sum_{\varepsilon\in \{-1,1\}^m} P\left(\varepsilon_1 x^{(1)}+ \ldots + \varepsilon_m x^{(m)}\right).\]
It follows from this identity that
\begin{align}
\label{sim}
\sup_{\left\|x^{(k)} \right\|\leq 1} \left| B \left( x^{(1)},\ldots , x^{(m)} \right) \right| \leq e^m \sup_{\|x\|\leq 1} | P ( x ) |, 
\end{align}
for any norm $\|\cdot \|$ in $\C^n$.

In \cite{nosim}, {Defant} and {Schl\"uters} defined a non-symmetric $m$-linear form $L_P$ arising from a given $m$-homogeneous polynomial $P$.
More precisely, for an $m$-homogeneous polynomial $P:\C^n\rightarrow \C$ defined by
\[P(x)= \sum_{1\leq j_1\leq \ldots \leq j_m \leq n} c_{j_1 \ldots j_m} x_{j_1}\ldots x_{j_m},\]
its associated $m$-linear form $L_P: \left(\C^n \right)^m\rightarrow \C$ is given by
\[L_P \left(x^{(1)},\ldots,x^{(m)} \right)= \sum_{1\leq j_1\leq \ldots \leq j_m \leq n}  c_{j_1  \ldots j_m} x_{j_1}^{(1)}\ldots x_{j_m}^{(m)}.\]
Assuming unconditionality of the norm $\|\cdot \|$ in $\C^n$, {Defant} and {Schl\"uters} proved that a similar estimate as in \eqref{sim} holds for $L_P$.
Before providing further details we introduce an \textit{ad hoc} definition:

\begin{definition} 
 For $m,n \in \N$, we define $C(m,n)$ as the infimum of the constants $C>0$ such that for every $m$-homogeneous polynomial $P:\C^n\rightarrow \C$
 and every 1-unconditional norm $\|\cdot \|$ on $\C^n$ we have
 \[\sup_{\|x^{(k)} \|\leq 1} \left| L_P \left( x^{(1)},\ldots , x^{(m)} \right) \right| \leq C \sup_{\|x\|\leq 1} | P ( x ) |.\]
 Similarly, for $1\leq p<2$, we take $C_p(m,n)$ as the infimum of the constants $C>0$ such that for every $m$-homogeneous polynomial $P:\C^n\rightarrow \C$ we have
 \[\sup_{\|x^{(k)} \|_p\leq 1} \left| L_P \left( x^{(1)},\ldots , x^{(m)} \right) \right| \leq C \sup_{\|x\|_p\leq 1} | P ( x ) |.\]
\end{definition}

The aforementioned result of \cite{nosim} can be stated in terms of the previous definition.

\begin{theorem}[{\cite[Theorem 1.1]{nosim}}]
\label{teonosim}
There exists a universal constant $c_1\geq 1$ such that 
\[C(m,n)\leq (c_1 \log n)^{m^2}.\]
Moreover, for $1\leq p<2$, there is a constant $c_2=c_2(p)\geq 1$ for which
\[C_p(m,n) \leq c_2^{m^2}.\]
\end{theorem}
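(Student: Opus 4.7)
The plan is to bound $L_P$ via a dyadic block decomposition of the coordinate set, followed by symmetric polarization on each block pattern. Fix $N := \lceil \log_2 n\rceil +1$ and partition $\{1,\ldots,n\}$ into dyadic intervals $I_0,\ldots,I_{N-1}$. For each non-decreasing tuple $\mathbf{k}=(k_1,\ldots,k_m)\in\{0,\ldots,N-1\}^m$, let $P_{\mathbf{k}}$ collect the monomials of $P$ whose index tuple lies in $I_{k_1}\times\ldots\times I_{k_m}$. Then $P=\sum_{\mathbf{k}}P_{\mathbf{k}}$ and $L_P=\sum_{\mathbf{k}}L_{P_{\mathbf{k}}}$, where the number of tuples is $\binom{N+m-1}{m}=O((\log n)^m)$.

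The key technical step would be a block-extraction estimate: $\|P_{\mathbf{k}}\|\lesssim(c\log n)^{m-1}\|P\|$ uniformly in $\mathbf{k}$. One obtains this by integrating $P$ against suitable characters on each block, using $1$-unconditionality to control the contribution of indices repeating inside a block, and paying a logarithmic price per variable. For tuples $\mathbf{k}$ with all distinct entries, the ordering constraint $j_1\leq\ldots\leq j_m$ in the definition of $L_{P_{\mathbf{k}}}$ is automatic from $j_r\in I_{k_r}$, so $L_{P_{\mathbf{k}}}$ equals the symmetric multilinear form polarizing $P_{\mathbf{k}}$, and \eqref{sim} gives $\|L_{P_{\mathbf{k}}}\|\leq e^m\|P_{\mathbf{k}}\|$. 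For tuples with repetitions, I would induct on $m$: if $k_r=k_{r+1}$, freeze the variables in all other blocks and reduce to a lower-degree problem on the repeated sub-block, invoking the induction hypothesis at cost $(c\log n)^{(m-1)^2}$. Summing over all $O((\log n)^m)$ tuples yields $C(m,n)\leq(c\log n)^{m^2}$.

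For the $\ell_p$ case with $1\leq p<2$, the same scheme applies, but the block-extraction estimate improves from $(c\log n)^{m-1}$ to a universal constant, essentially because $\ell_p$-norms admit Khintchine-type comparisons with Rademacher averages that are independent of dimension. All logarithmic losses then disappear, giving $C_p(m,n)\leq c_2^{m^2}$.

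The main obstacle I foresee is the block-extraction estimate when $\mathbf{k}$ has repeated entries. A naive averaging over Rademacher signs attached to each block fails because an even power of $\pm 1$ is $1$, so the target block cannot be isolated by signs alone; one must instead average against $M$th roots of unity for some $M>m$ (or use a finer smoothing), paying an extra logarithmic price per variable. Getting precisely the exponent $m^2$ rather than a larger polynomial in $m$ then requires careful bookkeeping so that the block-structure losses and the inductive losses combine multiplicatively in the constant but only additively in the exponent.
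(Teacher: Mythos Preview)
This theorem is quoted from Defant--Schl\"uters \cite{nosim}; the present paper does not give its own proof but summarizes the strategy in one sentence: one passes from $L_P$ to $B$ by a chain of partial symmetrizations, each step being a Schur multiplier that decomposes into diagonal pieces $D^{u,v}$ and main-triangle pieces $T^{u,v}$, with $\mu_{\|\cdot\|}(T^{u,v})\lesssim\log n$ in general and $\mu_{\|\cdot\|_p}(T^{u,v})\lesssim_p 1$ for $1\le p<2$ (Lemma~\ref{lema2}). Your dyadic-block scheme is a genuinely different organization of the argument.

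For the unconditional case your outline is plausible but the bookkeeping is not done. Block extraction in fact costs nothing: averaging $P(\omega\cdot x)$ over $(m{+}1)$-th roots of unity attached to each block and using $1$-unconditionality gives $\|P_{\mathbf k}\|\le\|P\|$ with no logarithmic loss, so the $(c\log n)^{m-1}$ you claim there is not where the logarithm enters. The logarithms must instead come from the $\binom{N+m-1}{m}$ block patterns and from the recursion on repeated blocks. That recursion needs more than you indicate: freezing the other variables produces an $L_Q$-form on one block, costing a factor $C(m_i,|I_{k}|)$, but afterwards you are left with a multi-homogeneous form in the block groups whose norm must still be compared to $\|P_{\mathbf k}\|$, and the relevant polarization constant is not $e^m$ for general $1$-unconditional norms. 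Whether all of this, summed over patterns, really stays below $(c\log n)^{m^2}$ is not established by your sketch.

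The $\ell_p$ part has a concrete gap. Your decomposition has $\binom{N+m-1}{m}\asymp(\log n)^m$ summands; even if every individual estimate were dimension-free, the triangle inequality alone inserts a $(\log n)^m$ factor, so ``all logarithmic losses disappear'' cannot hold in this scheme. Khintchine-type averaging controls each $P_{\mathbf k}$ separately but says nothing about $\sum_{\mathbf k}\|L_{P_{\mathbf k}}\|$. The Defant--Schl\"uters proof avoids this entirely by never splitting into blocks: the dimension-free input is Bennett's bound on the main-triangle Schur multiplier for $\ell_p$, applied once per symmetrization step. To salvage your approach for $\ell_p$ you would need a replacement for the dyadic splitting, not merely improved constants inside it.
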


Note that by the uniqueness of the symmetric $m$-linear form $B$ we have
\begin{align}
\label{sim2}
 B \left(x^{(1)},\ldots,x^{(m)} \right)=\frac{1}{m!}\sum_{\sigma\in \Sigma_m} L_P \left(x^{\sigma(1)},\ldots,x^{\sigma(m)} \right),
\end{align}
where $\Sigma_m$ is the group of permutations of $m$ elements.
The proof of Theorem \ref{teonosim} consists of bounding the norm of $L_P$ by successive partial symmetrizations starting at $L_P$
and ending at the fully symmetrized $B$. Finally, applying \eqref{sim} yields the result.
Changing only the way in which this symmetrization is carried out and using the same arguments as in \cite{nosim},
we obtain improved bounds for the constants $C(m,n)$ and $C_p(m,n)$. Additionally, we provide lower bounds for these constants. Our main result is the following.

\begin{theorem}
\label{teo1}
There exists a universal constant $c_1\geq 1$ such that 
\[\left(\frac{\log\left(\frac{2n}{m}\right)-\pi}{\pi}\right)^{m/2} \leq C(m,n)\leq c_1^m m^m (\log n)^{m-1}.\]
Moreover, for $1\leq p<2$, there is a constant $c_2=c_2(p)\geq 1$ for which 
\[m^{\frac{m}{p}}\leq C_p(m,n) \leq c_2^m m^m.\]
\end{theorem}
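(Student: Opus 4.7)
The statement consists of four bounds (two upper, two lower); we outline the plan for each.

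\textbf{The upper bounds.} Our strategy follows the framework of \cite{nosim}: reduce the estimate of $\|L_P\|$ to that of $\|B\|$ through a sequence of partial symmetrizations, then invoke the polarization inequality \eqref{sim}. The improvement over \cite{nosim} lies in the symmetrization scheme itself. We build an explicit chain
\[
L_P = M_1, M_2, \ldots, M_m = B,
\]
in which $M_k$ is symmetric in its first $k$ arguments. The form $M_{k+1}$ is obtained from $M_k$ by an \emph{incremental} step that inserts the $(k+1)$-th argument uniformly at random into the block of the first $k$ already-symmetric arguments; concretely,
\[
M_{k+1}\!\left(x^{(1)},\ldots,x^{(m)}\right) = \frac{1}{k+1}\sum_{j=1}^{k+1} M_k\!\left(x^{(1)},\ldots,x^{(j-1)},x^{(k+1)},x^{(j)},\ldots,x^{(k)},x^{(k+2)},\ldots,x^{(m)}\right).
\]
This is the card-shuffling step alluded to in the abstract: one inserts the new card into a uniformly random position of the sorted stack. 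The core technical lemma of \cite{nosim} applies to each such step, contributing a factor of order $c\,m\log n$ in the unconditional case and $c_2\,m$ in the $\ell_p$ case. Since only $m-1$ insertions are needed to pass from $L_P$ to $B$, the accumulated cost is $(c\,m\log n)^{m-1}$ (respectively $(c_2\,m)^{m-1}$). Applying the polarization inequality \eqref{sim} to $B$ introduces a factor of at most $e^m$, and after absorbing constants we obtain the claimed upper bounds.

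\textbf{Lower bound for $C_p$.} This is obtained from a single explicit monomial. Take $P(z) = z_1 z_2 \cdots z_m$ (which is well defined as soon as $n \geq m$). Its associated multilinear form is $L_P(y^{(1)},\ldots,y^{(m)}) = y^{(1)}_1\, y^{(2)}_2 \cdots y^{(m)}_m$. Choosing $y^{(k)} = e_k$ gives $\|L_P\|_{(\ell_p^n)^m\to\mathbb{C}} = 1$. For $\|P\|_{\ell_p^n\to\mathbb{C}}$, Lagrange multipliers (or AM--GM) under the constraint $\sum_k |z_k|^p \leq 1$ show that the supremum of $|z_1\cdots z_m|$ is attained at $|z_k|=m^{-1/p}$ for $k\leq m$, yielding $\|P\|_{\ell_p^n\to\mathbb{C}}=m^{-m/p}$. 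The resulting ratio $m^{m/p}$ gives the desired lower bound for $C_p(m,n)$.

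\textbf{Lower bound for $C(m,n)$.} This is the most delicate step and is expected to be the main obstacle of the proof. One must construct, for each $m$ and $n$ large enough with respect to $m$, an $m$-homogeneous polynomial $P$ and a 1-unconditional norm on $\mathbb{C}^n$ for which $\|L_P\|/\|P\| \geq \bigl(\tfrac{\log(2n/m)-\pi}{\pi}\bigr)^{m/2}$. The exponent $m/2$ together with the explicit appearance of $\pi$ is suggestive of an independent-slots construction in which each of $m$ blocks (of size about $n/m$) contributes a sharp Khintchine-type factor of order $\sqrt{\log(n/m)/\pi}$. A plausible avenue is to partition $\{1,\ldots,n\}$ into $m$ blocks $B_1,\ldots,B_m$ of equal size and consider a polynomial of the form $P(z)=\sum_{j_1\in B_1,\ldots,j_m\in B_m}\varepsilon_{j_1\ldots j_m} z_{j_1}\cdots z_{j_m}$ with signs $\varepsilon\in\{\pm 1\}$, paired with an appropriate 1-unconditional norm. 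The numerator $\|L_P\|$ is then bounded below by testing on structured multi-vectors slot by slot, while the denominator $\|P\|$ is bounded above through a Kahane--Salem--Zygmund-type estimate (for random signs) or by an explicit Rudin--Shapiro-style deterministic choice. The principal technical difficulty is to recover the factor of $\pi$ exactly rather than up to a universal constant, which requires careful tracking of the sharp asymptotics of the moment estimates involved.
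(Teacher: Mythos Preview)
Your lower bound for $C_p(m,n)$ matches the paper exactly. The other three pieces, however, have genuine gaps.

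\textbf{Upper bounds.} Your chain of partial symmetrizations is not what the paper does, and as written it does not work. First, your explicit formula for $M_{k+1}$ is incorrect: inserting $x^{(k+1)}$ at position $j$ and shifting $x^{(j)},\ldots,x^{(k)}$ right forces $x^{(k)}$ into slot $k{+}1$ for every $j\le k$, so already $M_3$ fails to be symmetric in its first three arguments and $M_m\neq B$. More seriously, even the corrected partial-symmetrization chain (in which $M_k$ is the honest symmetrization of $L_P$ over its first $k$ variables) does not obviously yield a single $\log n$ per step. The nonzero coefficients of $M_k$ are supported on indices with $\max(i_1,\ldots,i_k)\le i_{k+1}$, so the Schur multiplier carrying $M_{k+1}$ back to $M_k$ must enforce all $k$ inequalities $i_1\le i_{k+1},\ldots,i_k\le i_{k+1}$; this is a product of $k$ triangle projections and, via Lemma~\ref{lema2}, costs $(\log n)^k$. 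Summing over $k$ one recovers the $(\log n)^{O(m^2)}$ bound of \cite{nosim}, not the improvement. The paper's shuffle is deliberately \emph{not} a sequence of partial symmetrizations: it applies the Fischer--Yates algorithm to the \emph{monomial subindices}, producing intermediate forms $S_kL_P$ whose nonzero coefficients satisfy only the ordered-tail condition $i_{k+1}\le\cdots\le i_m$ (no constraint tying the head to the tail). Consequently the multiplier $R_k$ in $S_{k-1}L_P=R_k* S_kL_P$ contains exactly one triangle factor $T^{k,k+1}$ (Lemmas~\ref{coef2} and~\ref{lema1}), and the $m-1$ steps contribute $(\log n)^{m-1}$ in total. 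Your appeal to ``the core technical lemma of \cite{nosim}'' glosses over precisely the point where the argument must be redesigned.

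\textbf{Lower bound for $C(m,n)$.} Your plan (random signs, Kahane--Salem--Zygmund, blockwise Khintchine constants) is speculative and is not the mechanism behind the bound. The paper works on $\ell_\infty^n$ and first uses Pe{\l}czy\'nski's commensurate-characters theorem to upgrade the scalar inequality $\|L_P\|\le C(m,n)\|P\|$ on $\mathbb{T}^n$ to a comparison of \emph{vector-valued} averages. The witness is then Bourgain's Hilbert-matrix example in $\mathcal{L}(\ell_2)$, namely $v_{ij}=\tfrac{1}{i-j}e_i\otimes e_j+\tfrac{1}{j-i}e_j\otimes e_i$, for which $\int_{\mathbb{T}^n}\bigl\|\sum_{i<j}v_{ij}x_ix_j\bigr\|\le\pi$ while the decoupled integral is at least $\log n-\pi$. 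Taking the projective tensor product of $m/2$ independent copies on disjoint coordinate blocks of size $2n/m$ yields the exponent $m/2$ and the constants $\pi$ and $\log(2n/m)$ exactly. None of these ingredients---Pe{\l}czy\'nski's transference, the Hilbert-matrix/main-triangle-projection example, or the tensor-power construction---appear in your outline, and a random-sign approach would not recover the stated constants.
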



\begin{remark}
 {Defant} and {Schl\"uters} achieved similar upper bounds by refining their original calculations from \cite{nosim} as it was mentioned during a personal communication.
\end{remark}

\begin{remark}
\label{rem1}
Scrutiny of the theorem's proof suggests that the underlying reason which determines the magnitude of the constants $C(m,n)$ and $C_p(m,n)$ 
is the behaviour of the operator known as the main triangle projection.
Roughly speaking, the main triangle projection is the operator which given a matrix in $\C^{n\times n}$ returns the same matrix with zeroes below the diagonal.
Each norm on $\C^n$ induces an operator norm in $\C^{n\times n}$ and again this induces a norm for the main triangle projection.
Estimations of the latter norm are the ones that shape the upper and lower bounds of $C(m,n)$ and $C_p(m,n)$ that were obtained.
\end{remark}

\section{Symmetrization}
The following may be deduced from \eqref{sim2}.
\begin{align*}
 B \left(x^{(1)},\ldots,x^{(m)} \right)&=\frac{1}{m!}\sum_{\sigma\in \Sigma_m} L_P \left(x^{\sigma(1)},\ldots,x^{\sigma(m)} \right)
 \\ &=\frac{1}{m!}\sum_{\sigma\in \Sigma_m} \sum_{1\leq j_1\leq \ldots \leq j_m \leq n}  c_{j_1  \ldots j_m} x_{j_1}^{\sigma(1)}\ldots x_{j_m}^{\sigma(m)}
 \\ &=\frac{1}{m!}\sum_{\sigma\in \Sigma_m} \sum_{1\leq j_1\leq \ldots \leq j_m \leq n}  c_{j_1  \ldots j_m} x_{j_{\sigma^{-1}(1)}}^{(1)}\ldots x_{j_{\sigma^{-1}(m)}}^{(m)}
 \\ &=\frac{1}{m!}\sum_{\tau\in \Sigma_m} \sum_{1\leq j_1\leq \ldots \leq j_m \leq n}  c_{j_1  \ldots j_m} x_{j_{\tau(1)}}^{(1)}\ldots x_{j_{\tau(m)}}^{(m)}.
\end{align*}

From a probabilistic point of view, this may be restated as
\begin{align}
\label{sim3}
B \left(x^{(1)},\ldots,x^{(m)} \right)
=E \left[ \sum_{1\leq j_1\leq \ldots \leq j_m \leq n}  c_{j_1  \ldots j_m} x_{j_{\sigma(1)}}^{(1)}\ldots x_{j_{\sigma(m)}}^{(m)}\right], 
\end{align}
where expectation is taken over $\sigma\in \Sigma_m$ and $\Sigma_m$ is endowed with the equiprobability measure. 
In other words, $B$ is the expected value of $L_P$ when the order of the monomials' subindices is an equidistributed random variable. 
Thus, a card-shuffling procedure applied to the order of the subindices will yield a symmetrization procedure for $L_P$ by taking expectation.
We will use the {Fischer-Yates} shuffle in its original version which can be found in \cite{fiya}. It goes as follows.
Choose a random card from an ordered deck and leave it on top.
Next, choose a random card between the second and the last place and leave it in the second place, and so on.
At the last step, choose between the last two cards which one will go in the penultimate place.
After applying this procedure, an ordered deck will be completely shuffled, that is, any arrangement will be equally probable.

\begin{remark}
 Note that at any given step, the $k$-th step say, the first $k-1$ cards (which have been previously selected) are completely random,
while the last cards remain completely ordered. This special structure will be crucial in the proof of Theorem \ref{teo1}.
\end{remark}

Next, we introduce the symmetrization procedure arising from the {Fischer-Yates} shuffle. 
For every $1\leq k \leq m-1$ we let $\mathbb{P}_k$ be the probability distribution on $\Sigma_m$ associated to performing the first $k$ steps of the shuffling algorithm. 
We define the $k$-th shuffle $S_k$ of an $m$-form $L: \left(\C^n \right)^m\rightarrow \mathbb{C}$ by
\[S_k L\left(x^{(1)},\ldots,x^{(m)} \right) 
  =E \left[ \sum_{i_1,\ldots, i_m=1 }^n  c_{i_1  \ldots i_m} x_{i_{\sigma(1)}}^{(1)}\ldots x_{i_{\sigma(m)}}^{(m)}\right],\]
where $\sigma \sim \mathbb{P}_k.$

In particular, from \eqref{sim3} and the fact that the $(m-1)$-th step of the shuffle achieves equidistribution we have
\[B= S_{m-1} L_P.\] However, it should be noticed that the intermediate shuffles are not partial symmetrizations 
since we are symmetrizing the monomials' subindices rather than the variables.

In order to study the structure of $S_k$, we define the $k$-th shuffling step $T_k$ of an $m$-form $L: \left(\mathbb{C}^n \right)^m\rightarrow \mathbb{C}$ by
\begin{multline*}
 T_k L  \left(x^{(1)},\ldots,x^{(m)} \right)
 \\ = \frac{1}{m-k+1} \sum_{l=k}^m L \left(x^{(1)},\ldots,x^{(k-1)},x^{(k+1)},\ldots,x^{(l)},x^{(k)},x^{(l+1)},\ldots,x^{(m)} \right). 
\end{multline*}

\begin{lemma}
 For every $1\leq k \leq m-1$ we have  that $S_k= T_k \ldots T_1$.
\end{lemma}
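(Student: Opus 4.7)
The plan is to unfold both sides into expressions that are manifestly equal. Set $\sigma^* L(x) := \sum_{i_1,\ldots,i_m} c_{i_1 \ldots i_m} x^{(1)}_{i_{\sigma(1)}} \cdots x^{(m)}_{i_{\sigma(m)}}$ and $R_\rho L(x) := L(x^{(\rho(1))}, \ldots, x^{(\rho(m))})$. Reordering the factors of each monomial by their upper index yields the basic identities $R_\rho L = (\rho^{-1})^* L$ and $R_\pi R_\rho L = R_{\pi \rho} L$, so that
\[
S_k L = E_{\sigma \sim \mathbb{P}_k} [R_{\sigma^{-1}} L].
\]

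On the other side, one reads off from the definition of $T_k$ that $T_k L = \frac{1}{m-k+1} \sum_{l=k}^m R_{\pi_l^{(k)}} L$, where $\pi_l^{(k)}$ is the permutation of $\{1,\ldots,m\}$ that sends $j \mapsto j+1$ for $k \le j \le l-1$, maps $l \mapsto k$, and fixes the remaining elements. Iterating this formula gives
\[
T_k \cdots T_1 L = \frac{(m-k)!}{m!} \sum_{l_1=1}^m \sum_{l_2=2}^m \cdots \sum_{l_k=k}^m R_{\pi_{l_k}^{(k)} \cdots \pi_{l_1}^{(1)}} L.
\]

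The final step is to match the permutations appearing in the two averages. Inspection of the Fisher-Yates algorithm shows that if the $j$-th random choice is $l_j \in \{j,\ldots,m\}$, then after $k$ steps the deck is in the configuration $\sigma = \gamma_{l_1}^{(1)} \cdots \gamma_{l_k}^{(k)}$, where $\gamma_l^{(k)}$ is the cycle that moves the card at position $l$ to position $k$ and shifts the cards at positions $k, \ldots, l-1$ one place to the right. A direct comparison shows $\gamma_l^{(k)} = (\pi_l^{(k)})^{-1}$, so the map $(l_1, \ldots, l_k) \mapsto \pi_{l_k}^{(k)} \cdots \pi_{l_1}^{(1)} = \sigma^{-1}$ is a bijection between tuples $l_j \in \{j,\ldots,m\}$ and the support of $\mathbb{P}_k$ under which the uniform measure on tuples pushes forward to $\mathbb{P}_k$. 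The desired identity then follows by comparing the two expressions term-by-term.

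The only real obstacle is keeping the left/right action of $\Sigma_m$ consistent: $T_k$ acts on the upper indices of the monomials via $\pi_l^{(k)}$, whereas Fisher-Yates acts on the lower indices (the deck) via the inverse permutation $\gamma_l^{(k)}$. Once these mutual inverses are identified, the argument reduces to a routine reorganisation of sums.
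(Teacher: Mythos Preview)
Your proof is correct and follows essentially the same approach as the paper's: both arguments identify the $k$-th Fisher--Yates step with the cycle $(l\ l{-}1\ \cdots\ k)$ acting on indices and verify that composing these cycles reproduces $\mathbb{P}_k$, the only difference being that the paper packages this as an induction establishing $S_k = T_k S_{k-1}$ on monomials, whereas you expand both sides fully over tuples $(l_1,\ldots,l_k)$ and match them in one stroke via the identity $\sigma^{-1} = \pi_{l_k}^{(k)}\cdots\pi_{l_1}^{(1)}$.
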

\begin{proof}
 Since $T_k$ and $S_k$ are linear for every $1\leq k \leq m-1$, it is enough to check that the equality holds for monomials.
 Fix $1\leq i_1, \ldots, i_m \leq n$, we have to prove that
 \[S_k \left(x^{(1)}_{i_1} \ldots x^{(m)}_{i_m}\right)= T_k \ldots T_1\left(x^{(1)}_{i_1} \ldots x^{(m)}_{i_m}\right).\]
 We will proceed by induction.
 If $k=1$, the random permutation $\sigma$ is a cycle in $\Sigma_m$. 
 More precisely, using the cycle notation in $\Sigma_m$ we have that $\sigma$ takes the value $(l \ l-1 \ \ldots \ 1)$ for some $1\leq l \leq m$ with probability $1/m$.
 Therefore, we get
 \begin{align*}
  S_1\left(x^{(1)}_{i_1} \ldots x^{(m)}_{i_m}\right)&=E \left[ x_{i_{\sigma(1)}}^{(1)}\ldots x_{i_{\sigma(m)}}^{(m)}\right]
  =\frac{1}{m} \sum_{l=1}^m x^{(1)}_{i_{l}} x^{(2)}_{i_{1}} \ldots  x^{(l)}_{i_{l-1}} x^{(l+1)}_{i_{l+1}} \ldots x^{(m)}_{i_{m}}
  \\ &=\frac{1}{m} \sum_{l=1}^m  x^{(2)}_{i_{1}} \ldots  x^{(l)}_{i_{l-1}} x^{(1)}_{i_{l}} x^{(l+1)}_{i_{l+1}} \ldots x^{(m)}_{i_{m}} 
  =T_1\left(x^{(1)}_{i_1} \ldots x^{(m)}_{i_m}\right).
 \end{align*}
 
 Only the inductive step remains to be proven. Let $2\leq k \leq m-1$ and suppose the lemma holds for $k-1$. 
 From the definition of the {Fischer-Yates} shuffle we may deduce that a random permutation with law $\mathbb{P}_k$ can be written 
 as the composition of two independent random permutations $\tau$ and $\sigma$ where $\sigma \sim \mathbb{P}_{k-1}$ 
 and $\tau$ takes the value $\tau_l =(l \ l-1 \ \ldots \ k)$ for some $k\leq l \leq m$ with probability $1/{(m-k+1)}$.
For a fixed $\tau$, we may define new indices $j_1,\ldots,j_m$ such that $j_k=i_{\tau(k)}$ for every $1\leq k\leq m$. 
So we obtain
\begin{align*}
S_k \left(x^{(1)}_{i_1} \ldots \right.& \left. x^{(m)}_{i_m} \right) 
=E_{\tau,\sigma} \left[  x_{i_{\tau\sigma(1)}}^{(1)}\ldots x_{i_{\tau\sigma(m)}}^{(m)}\right]
=E_\tau \left[E_\sigma \left[  x_{j_{\sigma(1)}}^{(1)}\ldots x_{j_{\sigma(m)}}^{(m)}\right]\right]
\\&=E_\tau \left[S_{k-1} \left(x^{(1)}_{j_1} \ldots x^{(m)}_{j_m} \right)\right]
=E_\tau \left[S_{k-1} \left(x_{i_{\tau(1)}}^{(1)}\ldots x_{i_{\tau(m)}}^{(m)} \right)\right]
\\&=\frac{1}{m-k+1} \sum_{l=k}^m S_{k-1} \left(x^{(1)}_{i_{\tau_l(1)}} \ldots x^{(m)}_{i_{\tau_l(m)}} \right)
\\&=\frac{1}{m-k+1} \sum_{l=k}^m S_{k-1} 
\left(x^{(1)}_{i_1} \ldots x^{(k-1)}_{i_{k-1}} x^{(k)}_{i_{l}} x^{(k+1)}_{i_{k}} \ldots x^{(l)}_{i_{l-1}} x^{(l+1)}_{i_{l+1}} \ldots x^{(m)}_{i_m} \right)
\\&=\frac{1}{m-k+1} \sum_{l=k}^m S_{k-1}
\left(x^{(1)}_{i_1} \ldots x^{(k-1)}_{i_{k-1}}  x^{(k+1)}_{i_{k}} \ldots x^{(l)}_{i_{l-1}} x^{(k)}_{i_{l}} x^{(l+1)}_{i_{l+1}} \ldots x^{(m)}_{i_m} \right)
\\&=T_k S_{k-1} \left(x^{(1)}_{i_1} \ldots  x^{(m)}_{i_m} \right) 
,
\end{align*}
which completes the proof.
\end{proof}



Following \cite{nosim}, we turn to study how the coefficients of the succesive shuffles of $L_P$ change.
Let $L: \left(\C^n \right)^m\rightarrow \mathbb{C}$ be an $m$-linear form given by
\[L \left(x^{(1)},\ldots,x^{(m)} \right)=\sum_{i \in \mathcal{I}(m,n)} c_i x_{i_1}^{(1)} \ldots x_{i_m}^{(m)},\]
where $\mathcal{I}(m,n)=\{1,\ldots , n\}^m$. We will denote its coefficients by $c_i(L)=c_i$. 

\begin{lemma}
\label{coef2}
 For $m,n \in \N$, $1 \leq k \leq m-1$, $i\in \mathcal{I}(m,n)$ and an $m$-homogeneous polynomial $P:\mathbb{C}^n\rightarrow \mathbb{C}$ we have
  \[c_i \left(S_{k-1} L_P \right)=\begin{cases}
                  (m-k+1) \left(1+\sum_{u=1}^{m-k} \delta_{i_k,i_{k+u}}\left(\frac{1}{u+1}-\frac{1}{u}\right)\right) c_i \left(S_k L_P \right) \quad &\text{if } i_k\leq i_{k+1}
                  \\ 0 &\text{otherwise}
                 \end{cases}
,\]
where $\delta$ is the {Kronecker} delta and we take $S_0 L_P = L_P$.
\end{lemma}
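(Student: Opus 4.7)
My plan is to induct on $k$ using the factorization $S_k = T_k S_{k-1}$, reducing the identity to a support analysis plus a telescoping computation. The first step is to establish, by induction on $k$ (with base case $k=0$ given by the definition of $L_P$), the support claim: $c_j(S_k L_P) \ne 0$ implies $j_{k+1} \le j_{k+2} \le \cdots \le j_m$. Applied at level $k-1$, this already disposes of the ``otherwise'' branch of the lemma, since $c_i(S_{k-1} L_P) = 0$ whenever $i_k > i_{k+1}$.

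For the main branch I would fix $i$ with $i_k \le i_{k+1}$ and expand
\[
c_i(S_k L_P) = \frac{1}{m-k+1} \sum_{l=k}^m c_{\pi_l(i)}(S_{k-1} L_P), \qquad \pi_l(i) := (i_1,\ldots,i_{k-1},i_{k+1},\ldots,i_l,i_k,i_{l+1},\ldots,i_m),
\]
which comes directly from the definition of $T_k$. The $l=k$ term contributes $c_i(S_{k-1} L_P)$. For $l>k$, the support claim applied to $\pi_l(i)$ at level $k-1$ forces the tail $(i_{k+1},\ldots,i_l,i_k,i_{l+1},\ldots,i_m)$ to be nondecreasing; together with $i_k \le i_{k+1}$ this sandwiches $i_{k+1} \le i_l \le i_k \le i_{k+1}$, so $i_k = i_{k+1} = \cdots = i_l$ and hence $\pi_l(i) = i$. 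Writing $s$ for the length of the constant run at position $k$, exactly $s$ summands survive and each equals $c_i(S_{k-1} L_P)$, yielding $c_i(S_k L_P) = \tfrac{s}{m-k+1}\, c_i(S_{k-1} L_P)$. When the tail $i_{k+1} \le \cdots \le i_m$ fails, the same analysis forces both sides to vanish, so the identity remains trivially true.

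Finally, since $i$ is nondecreasing from position $k$ onward whenever either side is nonzero, the indicators $\delta_{i_k, i_{k+u}}$ equal $1$ precisely for $u = 1, \ldots, s-1$, so the sum in the lemma telescopes:
\[
1 + \sum_{u=1}^{m-k} \delta_{i_k, i_{k+u}}\left(\frac{1}{u+1} - \frac{1}{u}\right) = 1 + \sum_{u=1}^{s-1}\left(\frac{1}{u+1} - \frac{1}{u}\right) = \frac{1}{s}.
\]
Multiplying by $m-k+1$ and inverting the derived relation between $c_i(S_{k-1} L_P)$ and $c_i(S_k L_P)$ recovers the stated formula. The main delicate point is the sandwich argument forcing every nonzero $l > k$ summand to coincide with the $l = k$ one (by collapsing $\pi_l$ onto coincident coordinates); everything else is bookkeeping.
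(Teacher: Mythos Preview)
Your proposal is correct and follows essentially the same route as the paper: derive the coefficient formula $c_i(S_k L_P)=\frac{1}{m-k+1}\sum_{l=k}^m c_{\pi_l(i)}(S_{k-1}L_P)$ from $S_k=T_kS_{k-1}$, prove by induction the support constraint $c_i(S_kL_P)\neq 0 \Rightarrow i_{k+1}\le\cdots\le i_m$, use it to kill the ``otherwise'' branch and the tail-fails case, count the surviving summands via the constant run at position $k$, and finish with the telescoping identity $1+\sum_{u=1}^{s-1}(\tfrac{1}{u+1}-\tfrac{1}{u})=\tfrac{1}{s}$. Your sandwich argument $i_{k+1}\le i_l\le i_k\le i_{k+1}$ is just a rephrasing of the paper's observation that $c_{\pi_l(i)}(S_{k-1}L_P)$ equals $c_i(S_{k-1}L_P)$ for $l$ up to the end of the run and vanishes thereafter.
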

\begin{proof}
 We begin the proof by calculating the coefficents $c_i\left(S_k L_P \right)$ in terms of the coefficients $c_i \left(S_{k-1} L_P \right)$.
 Observe that for an $m$-linear form $L: \left(\C^n \right)^m\rightarrow \mathbb{C}$ we have
 \begin{align*}
 T_k &L  \left(x^{(1)},\ldots,x^{(m)} \right) \notag
 \\& =\frac{1}{m-k+1} \sum_{l=k}^m L \left(x^{(1)},\ldots,x^{(k-1)},x^{(k+1)},\ldots,x^{(l)},x^{(k)},x^{(l+1)},\ldots,x^{(m)} \right) \notag
 \\& =\frac{1}{m-k+1} \sum_{l=k}^m \sum_{i \in \mathcal{I}(m,n)} c_i(L)  
 x_{i_1}^{(1)}\ldots x_{i_{k-1}}^{(k-1)}x_{i_{k}}^{(k+1)} \ldots x_{i_{l-1}}^{(l)} x_{i_{l}}^{(k)} x_{i_{l+1}}^{(l+1)} \ldots x_{i_{m}}^{(m)} \notag
 \\& = \sum_{i \in \mathcal{I}(m,n)} \frac{1}{m-k+1} \sum_{l=k}^m  c_i(L) 
 x_{i_1}^{(1)}\ldots x_{i_{k-1}}^{(k-1)} x_{i_{l}}^{(k)} x_{i_{k}}^{(k+1)} \ldots x_{i_{l-1}}^{(l)}  x_{i_{l+1}}^{(l+1)} \ldots x_{i_{m}}^{(m)} \notag
 \\& = \sum_{i \in \mathcal{I}(m,n)} \frac{1}{m-k+1} \sum_{l=k}^m  c_{\left(i_1,\ldots,i_{k-1},i_{k+1}, \ldots, i_{l},i_{k},i_{l+1}, \ldots, i_{m} \right)}(L) 
 x_{i_1}^{(1)} \ldots x_{i_{m}}^{(m)}.
 \end{align*}
 Therefore, since $S_k=T_k S_{k-1}$, we deduce the formula 
 \begin{align}
   \label{coef1}
   c_i \left(S_k L_P \right)=\frac{1}{m-k+1} \sum_{l=k}^m  c_{\left(i_1,\ldots,i_{k-1},i_{k+1}, \ldots, i_{l},i_{k},i_{l+1}, \ldots, i_{m} \right)} \left(S_{k-1} L_P \right).
 \end{align}

 By the definition of $L_P$ if a coefficient $c_i \left(L_P \right)$ is not zero, then the index $i$ must satisfy that $1\leq i_1 \leq \ldots \leq i_m \leq n$.
 We will prove inductively that for $0 \leq k \leq m-1$, if the coefficient $c_i \left(S_k L_P \right)$ is not zero,
 then the index $i$ must satisfy that $1\leq i_{k+1} \leq \ldots \leq i_m \leq n$.
 
 Since $S_0 L_P = L_P$, the case $k=0$ is already proven.
 Now assume the assertion holds for $0 \leq k-1 \leq m-1$ and fix $i\in \mathcal{I}(m,n)$ such that $i_s >i_{s+1}$ for some $k+1\leq s \leq m-1$.
 Applying the inductive hypothesis we may deduce that
 \[c_{\left(i_1,\ldots,i_{k-1},i_{k+1}, \ldots, i_{l},i_{k},i_{l+1}, \ldots, i_{m} \right)} \left(S_{k-1} L_P \right)=0,\]
 for every $k\leq l \leq m$.
 Hence, using \eqref{coef1} we get that $c_i \left(S_k L_P \right)=0$ proving the inductive step.
 In particular,  we have shown that $c_i \left(S_{k-1} L_P \right)=0$ if $i_k>i_{k+1}$ as sought. 
 
 Now assume that $i_k\leq i_{k+1}$.
 If for some $k+1\leq s \leq m-1$ we have that $i_s >i_{s+1}$, then by the previous argument we may deduce that $c_i \left(S_{k-1} L_P \right)=c_i \left(S_k L_P \right)=0$ as desired.
 Therefore, it remains to check the statement when $1\leq i_k \leq \ldots \leq i_m \leq n$. Define $s = \sup \{k \leq u \leq m \ : i_u=i_k\}$ and notice that
 \[c_{\left(i_1,\ldots,i_{k-1},i_{k+1}, \ldots, i_{l},i_{k},i_{l+1}, \ldots, i_{m} \right)} \left(S_{k-1} L_P \right)=\begin{cases}
                  c_i \left(S_{k-1} L_P \right) \quad &\text{if } k\leq l\leq s
                  \\ 0 &\text{if } s< l\leq m
                 \end{cases}
 .\]
 Thus, we may push \eqref{coef1} further to get
 \begin{align*}
  c_i \left(S_k L_P \right)&=\frac{1}{m-k+1} \sum_{l=k}^m  c_{\left(i_1,\ldots,i_{k-1},i_{k+1}, \ldots, i_{l},i_{k},i_{l+1}, \ldots, i_{m} \right)} \left(S_{k-1} L_P \right) \notag
  \\ &=\frac{1}{m-k+1} \sum_{l=k}^s  c_i \left(S_{k-1} L_P \right)=\frac{s-k+1}{m-k+1}  c_i \left(S_{k-1} L_P \right).
 \end{align*}
 Since $s\geq k$, we have that $s-k+1\neq 0$. Thus, we get
 \begin{align*}
 \label{coef3}
  c_i \left(S_{k-1} L_P \right)&=\frac{m-k+1}{s-k+1}  c_i \left(S_k L_P \right)
  \\ &=(m-k+1) \left(1+\sum_{u=1}^{s-k} \left(\frac{1}{u+1}-\frac{1}{u}\right)\right) c_i \left(S_k L_P \right)
  \\ &=(m-k+1) \left(1+\sum_{u=1}^{m-k} \delta_{i_k,i_{k+u}}\left(\frac{1}{u+1}-\frac{1}{u}\right)\right) c_i \left(S_k L_P \right).
 \end{align*}
 This concludes the proof.
\end{proof}

As in \cite{nosim}, we will restate the previous lemma using {Schur} products.
For $A,B\in\C^{\mathcal{I}(m,n)}$, the {Schur} product $A*B$ is given by
\[c_i(A*B)=c_i(A)c_i(B),\]
where $c_i(\cdot)$ denotes de $i$-th entry of a matrix.
By identifying an $m$-linear form with its coefficients, we may compute the product between a matrix and an $m$-form. 
More precisely, for $A\in\C^{\mathcal{I}(m,n)}$ and an $m$-linear form $L: \left(\mathbb{C}^n \right)^m\rightarrow \mathbb{C}$ we define
$A*L: \left(\mathbb{C}^n \right)^m\rightarrow \mathbb{C}$ by
\[c_i(A*L)=c_i(A)c_i(L).\]
With this notation Lemma \ref{coef2} proves the formula
\begin{align}
 \label{formula1}
 S_{k-1} L_P=R_k*S_k L_P,
\end{align}
where $R_k\in\C^{\mathcal{I}(m,n)}$ is given by
\begin{align*}
 c_i \left(R_k \right)=
 \begin{cases}
  (m-k+1) \left(1+\sum_{u=1}^{m-k} \delta_{i_k,i_{k+u}}\left(\frac{1}{u+1}-\frac{1}{u}\right)\right) \quad &\text{if } i_k\leq i_{k+1} 
  \\ 0 &\text{otherwise}
 \end{cases}. 
\end{align*}

The matrix $R_k\in\C^{\mathcal{I}(m,n)}$ may be decomposed as sums and products of simpler matrices. 
For $u,v\in\{1,\ldots,m\}$, let $D^{u,v},T^{u,v}\in \mathbb{C}^{\mathcal{I}(m,n)}$ be such that for every $i\in \mathcal{I}(m,n)$ we have
  \[ c_i \left(D^{u,v} \right)= \begin{cases}
                  1 \quad &\text{if } i_u= i_v
                  \\ 0 &\text{otherwise}
                 \end{cases},\]
  \[ c_i \left(T^{u,v} \right)= \begin{cases}
                  1 \quad &\text{if } i_u\leq i_v
                  \\ 0 &\text{otherwise}
                 \end{cases}.\]               
Keeping Remark \ref{rem1} in mind, we may observe that $T^{u,v}$ bears a close ressemblance with the main triangle projection $T:\C^{n\times n}\rightarrow\C^{n\times n}$.
Indeed, note that $c_i(T^{u,v})=c_{i_u,i_v}(T)$ for every $i\in\mathcal{I}(m,n)$.
\begin{lemma}
 \label{lema1}
 For $1\leq k \leq m-1$, we have
 \[R_k = (m-k+1)T^{k,k+1}*\left(1+\sum_{u=1}^{m-k} D^{k,k+u}\left(\frac{1}{u+1}-\frac{1}{u}\right)\right).\]
\end{lemma}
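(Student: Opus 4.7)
The plan is to verify the identity entry by entry, exploiting the fact that the Schur product $*$ is defined coordinate-wise, so both sides can be compared at each multi-index $i\in\mathcal{I}(m,n)$ independently. Fix such an $i$. The factor $T^{k,k+1}$ contributes the indicator of the event $\{i_k\leq i_{k+1}\}$, so the right-hand side automatically vanishes whenever $i_k>i_{k+1}$, exactly matching the second branch of the definition of $c_i(R_k)$.

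Assuming $i_k\leq i_{k+1}$, I would then unwind the Schur product on the right-hand side. Because each $D^{k,k+u}$ has entry $c_i(D^{k,k+u})=\delta_{i_k,i_{k+u}}$, the bracketed factor evaluates to
\[
1+\sum_{u=1}^{m-k}\delta_{i_k,i_{k+u}}\!\left(\frac{1}{u+1}-\frac{1}{u}\right),
\]
and multiplying by $(m-k+1)\cdot c_i(T^{k,k+1})=(m-k+1)$ recovers precisely the first branch of the defining formula for $c_i(R_k)$ established in Lemma \ref{coef2}. Since the two matrices agree coordinate-wise, they are equal.

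The only interpretive point worth flagging is what is meant by the scalar $1$ inside the parentheses: since the outer operation is a Schur product, it should be read as the all-ones matrix in $\mathbb{C}^{\mathcal{I}(m,n)}$ (equivalently, the multiplicative identity for $*$). With that convention, the calculation above is a direct substitution and involves no nontrivial step. I do not anticipate any genuine obstacle here; the lemma is essentially a repackaging of the closed-form expression already obtained for $c_i(S_{k-1}L_P)$ into a cleaner algebraic shape that isolates the two structural ingredients, namely the "main triangle" pattern $T^{k,k+1}$ and the diagonal-coincidence patterns $D^{k,k+u}$, which will later be matched against operator norms in the spirit of Remark \ref{rem1}.
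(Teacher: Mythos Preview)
Your proposal is correct and follows exactly the same approach as the paper: a direct coordinate-wise verification using the definitions of $T^{k,k+1}$, $D^{k,k+u}$, and $R_k$. Your remark about interpreting the scalar $1$ as the all-ones matrix is a helpful clarification that the paper leaves implicit.
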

\begin{proof}
 For $i\in \mathcal{I}(m,n)$, we deduce that
 \begin{align*}
  c_i\left((m-k+1)\vphantom{\left(1+\sum_{u=1}^{m-k} D^{k,k+u}\left(\frac{1}{u+1}-\frac{1}{u}\right)\right)}\right.
     &\left. T^{k,k+1}*\left(1+\sum_{u=1}^{m-k} D^{k,k+u}\left(\frac{1}{u+1}-\frac{1}{u}\right)\right)\right)=
  \\ &=(m-k+1) c_i \left(T^{k,k+1} \right) \left(1+\sum_{u=1}^{m-k} c_i \left(D^{k,k+u} \right)\left(\frac{1}{u+1}-\frac{1}{u}\right)\right)
  \\ &= c_i \left(T^{k,k+1} \right) (m-k+1) \left(1+\sum_{u=1}^{m-k} \delta_{i_k,i_{k+u}}\left(\frac{1}{u+1}-\frac{1}{u}\right)\right)
  \\ &= c_i \left(R_k \right),
 \end{align*}
 which proves the statement.
\end{proof}

\section{Upper bounds}
In this section we provide the upper bounds for Theorem \ref{teo1}.
Let $\|\cdot\|$ be a norm on $\C^n$. For $A\in \C^{\mathcal{I}(m,n)}$, we define $\mu_{\|\cdot\|}(A)$ as the infimum of the constants $C>0$
such that for every $m$-linear form $L: \left(\C^n \right)^m\rightarrow \C$ we have
\[\sup_{\| x^{(k)} \|\leq 1}  \left|A*L \left(x^{(1)},\ldots,x^{(m)} \right) \right|\leq C \sup_{\| x^{(k)} \|\leq 1}  \left|L \left(x^{(1)},\ldots,x^{(m)} \right) \right|.\]
Note that $ \left(\C^{\mathcal{I}(m,n)},\mu_{\|\cdot\|} \right)$ is a {Banach} algebra.

We will use the following lemma by {Defant} and {Schl\"uters}.
\begin{lemma}[{\cite[Lemma 3.2]{nosim}}]
\label{lema2}
  For every $n,m\in\mathbb{N}$, every $u,v\in\{1,\ldots,m\}$ and every 1-unconditional norm $\| \cdot \|$ on $\mathbb{C}^n$
  \begin{align*} 
   \mu_{\| \cdot \|} \left(D^{u,v} \right)&=1,
   \\ \mu_{\| \cdot \|} \left(T^{u,v} \right)&\leq \log_2(2n).
  \end{align*}
  
 Moreover, for every $1\leq p<2$, there exists a constant $c=c(p)$ so that for every $n,m\in\mathbb{N}$
 \[\mu_{\| \cdot \|_p} \left(T^{u,v} \right)\leq c.\]
\end{lemma}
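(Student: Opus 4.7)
The plan is to treat the three bounds in order of difficulty.

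For $\mu_{\|\cdot\|}(D^{u,v})=1$, the natural device is unimodular averaging. If $\omega$ is uniformly distributed on $\T^n$, then $E[\omega_a\overline{\omega}_b]=\delta_{a,b}$, which gives
\[
(D^{u,v}*L)(x^{(1)},\ldots,x^{(m)})
= E_\omega\!\left[L\bigl(x^{(1)},\ldots,\omega\cdot x^{(u)},\ldots,\overline{\omega}\cdot x^{(v)},\ldots,x^{(m)}\bigr)\right],
\]
where $\omega\cdot y$ denotes coordinate-wise multiplication. Since $\|\cdot\|$ is 1-unconditional, $y\mapsto\omega\cdot y$ and $z\mapsto\overline{\omega}\cdot z$ are isometries, whence $\mu_{\|\cdot\|}(D^{u,v})\leq 1$; the reverse inequality is immediate by testing on a diagonal monomial.

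For $\mu_{\|\cdot\|}(T^{u,v})\leq\log_2(2n)$ I would first reduce to the bilinear case. Because $T^{u,v}$ depends only on positions $u$ and $v$ of the multi-index, fixing the arguments $x^{(k)}$ with $k\neq u,v$ in $L$ produces a bilinear form $B(y,z)$ in $y=x^{(u)}$, $z=x^{(v)}$ whose norm does not exceed $\|L\|$, and $T^{u,v}*L$ evaluated at the full tuple equals $(T*B)(y,z)$, with $T$ the scalar main triangle projection. The task thus reduces to bounding the Schur multiplier norm of $T$ on bilinear forms over $(\C^n,\|\cdot\|)^2$. For this I would use a dyadic decomposition. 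With $N=\lceil\log_2 n\rceil$ and $[1,n]$ embedded in $[1,2^N]$, write
\[
T=D+\sum_{k=0}^{N-1}A_k,
\]
where $D$ is the diagonal and $A_k$ is the indicator of pairs $(i,j)$ with $i<j$ lying in a common dyadic interval of length $2^{k+1}$ but in its two halves of length $2^k$. Each $A_k$ is supported on a disjoint family of rectangular all-ones blocks $(I_s\times J_s)_s$ with the $I_s$ pairwise disjoint, and likewise the $J_s$. To prove $\mu(A_k)\leq 1$ I would randomize via i.i.d.\ signs $\varepsilon_s$: setting $\tilde y=\sum_s\varepsilon_s\mathbf{1}_{I_s}y$ and $\tilde z=\sum_s\varepsilon_s\mathbf{1}_{J_s}z$, 1-unconditionality gives $\|\tilde y\|=\|y\|$, $\|\tilde z\|=\|z\|$, while the identity $E[\varepsilon_s\varepsilon_t]=\delta_{st}$ yields $E_\varepsilon[B(\tilde y,\tilde z)]=(A_k*B)(y,z)$. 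Summing over $k$ and combining with $\mu(D)\leq 1$ yields a bound of order $1+\log_2 n$, i.e.\ the desired estimate.

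For the $\ell_p$ estimate with $1\leq p<2$ the same reduction to bilinear forms applies, so the task becomes bounding the Schur multiplier norm of $T$ on bilinear forms over $\ell_p^n\times\ell_p^n$ \emph{independently} of $n$. The endpoint $p=1$ is trivial since the bilinear norm on $\ell_1^n\times\ell_1^n$ equals $\max_{i,j}|b_{ij}|$, forcing $\mu_{\|\cdot\|_1}(T)=1$. The range $1<p<2$ is where I expect the main obstacle: the dyadic argument above yields only a logarithmic bound, and interpolation from $p=1$ is useless because $T$ already has $\log n$-sized norm at $p=2$. A natural route is to decompose $T=\tfrac12(J+S+D)$, where $J$ is the all-ones matrix (a trivially contractive multiplier) and $S_{ij}=\mathrm{sgn}(j-i)$; the problem then reduces to a uniform bound on the Schur multiplier norm of $S$ on bilinear forms over $\ell_p^n\times\ell_p^n$, for which I would appeal to the classical dimension-free $L^p$-boundedness of Hilbert-transform/triangular-truncation type multipliers in the range $1<p<\infty$, transferred to the bilinear setting via the isometric embedding of $\ell_p^n$ into an $L^p$-space.
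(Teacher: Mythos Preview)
The paper does not actually prove this lemma: it is quoted verbatim from \cite{nosim}, and the text immediately following the statement points to Kwapie\'n--Pe{\l}czy\'nski \cite{kwap} and Bennett \cite{ben} for the underlying bounds on the main triangle projection. So there is no ``paper's own proof'' to compare against; your proposal is effectively a sketch of those background results.

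Your treatment of $D^{u,v}$ and of the $1$-unconditional bound for $T^{u,v}$ is correct and is precisely the Kwapie\'n--Pe{\l}czy\'nski argument: the reduction to the bilinear case is clean, and the dyadic block decomposition with sign-averaging gives $\mu(A_k)\le 1$ as you describe. (Your count $1+\lceil\log_2 n\rceil$ may overshoot $\log_2(2n)$ by at most~$1$ for non-dyadic $n$, but that is harmless here.)

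The genuine gap is in the $\ell_p$ part for $1<p<2$. The decomposition $T=\tfrac12(J+S+D)$ is fine, but the sentence ``transfer via the isometric embedding of $\ell_p^n$ into an $L^p$-space'' does not produce a bound on the Schur multiplier norm of $S$ on bilinear forms over $\ell_p^n\times\ell_p^n$. The relevant norm is the operator norm $\ell_p\to\ell_{p'}$, not an $L^p$-function norm, and the $L^p$-boundedness of the Hilbert transform does not transfer to a Schur-multiplier bound in this setting by embedding alone; indeed, at $p=2$ the Hilbert transform is perfectly bounded on $L^2$ while the triangle projection has Schur norm $\sim\log n$ on $B(\ell_2)$. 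The uniform bound for $1\le p<2$ is a genuinely nontrivial fact from \cite{ben} (exploiting that bilinear forms on $\ell_p\times\ell_p$ with $p<2$ correspond to operators $\ell_p\to\ell_{p'}$ with $p\le 2\le p'$, a regime with special factorization/summing properties), and your sketch does not supply that ingredient.
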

As mentioned in Remark \ref{rem1}, the estimates for $T^{u,v}$ rely on bounds for the norm of the main triangle projection
obtained by Kwapie\'n and Pe{\l}czy\'nski in \cite{kwap} and Bennett in \cite{ben}.

\begin{corollary}
 For every $n,m\in\mathbb{N}$, every $1\leq k \leq m-1$ and every 1-unconditional norm $\| \cdot \|$ on $\mathbb{C}^n$ we have
 \[\mu_{\| \cdot \|} \left(R_k \right)\leq 2(m-k+1)\mu_{\| \cdot \|} \left(T^{k,k+1} \right).\]
\end{corollary}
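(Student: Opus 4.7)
The plan is to feed Lemma \ref{lema1} into the Banach-algebra structure of $\left(\C^{\mathcal{I}(m,n)},\mu_{\|\cdot\|}\right)$ and then absorb the dependence on $D^{k,k+u}$ via a telescoping estimate. Concretely, starting from
\[R_k = (m-k+1)\,T^{k,k+1}*\left(1+\sum_{u=1}^{m-k} D^{k,k+u}\left(\tfrac{1}{u+1}-\tfrac{1}{u}\right)\right),\]
I would use submultiplicativity of $\mu_{\|\cdot\|}$ under the Schur product to peel off the factor $(m-k+1)\mu_{\|\cdot\|}(T^{k,k+1})$. It then suffices to bound the $\mu_{\|\cdot\|}$-norm of the second Schur factor by $2$, independently of $m$, $k$ and $\|\cdot\|$.

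For that second factor I would apply the triangle inequality for the norm $\mu_{\|\cdot\|}$, which reduces matters to controlling
\[1+\sum_{u=1}^{m-k}\left|\tfrac{1}{u+1}-\tfrac{1}{u}\right|\mu_{\|\cdot\|}\bigl(D^{k,k+u}\bigr).\]
By Lemma \ref{lema2} each $\mu_{\|\cdot\|}(D^{k,k+u})$ equals $1$, so the sum collapses to $1+\sum_{u=1}^{m-k}\bigl(\tfrac{1}{u}-\tfrac{1}{u+1}\bigr)$. This is a telescoping series bounded above by $1+1=2$ for every $m$ and $k$.

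Combining the two steps gives $\mu_{\|\cdot\|}(R_k)\leq 2(m-k+1)\,\mu_{\|\cdot\|}(T^{k,k+1})$, as claimed. I do not anticipate any serious obstacle: the only small point to verify is that the ``$1$'' appearing inside the parenthesis in Lemma \ref{lema1} should be interpreted as the identity matrix $\mathbf{1}\in\C^{\mathcal{I}(m,n)}$ (all entries equal to $1$), so that $\mu_{\|\cdot\|}(\mathbf{1})=1$; with that notational comment in place the argument is just an application of the Banach-algebra axioms and the telescoping identity.
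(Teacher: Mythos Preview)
Your proposal is correct and follows essentially the same argument as the paper: decompose $R_k$ via Lemma \ref{lema1}, use the Banach-algebra properties of $\mu_{\|\cdot\|}$ together with $\mu_{\|\cdot\|}(D^{k,k+u})=1$ from Lemma \ref{lema2}, and bound the telescoping sum $1+\sum_{u}(1/u-1/(u+1))$ by $2$. Your explicit remark that the ``$1$'' denotes the all-ones matrix (the Schur identity) is a helpful clarification but does not alter the strategy.
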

\begin{proof}
 From the last lemma we know that $\mu_{\| \cdot \|} \left(D^{u,v} \right)=1$ for every $u,v\in\{1,\ldots,m\}$.
 Since $ \left(\C^{\mathcal{I}(m,n)},\mu_{\|\cdot\|} \right)$ is a {Banach} algebra, we may deduce from Lemma \ref{lema1} that
 \begin{align*}
  \mu_{\| \cdot \|} \left(R_k \right)&=\mu_{\| \cdot \|}\left((m-k+1)T^{k,k+1}*\left(1+\sum_{u=1}^{m-k} D^{k,k+u}\left(\frac{1}{u+1}-\frac{1}{u}\right)\right)\right)
  \\ &\leq (m-k+1) \mu_{\| \cdot \|} \left(T^{k,k+1} \right)\left(1+\sum_{u=1}^{m-k} \mu_{\| \cdot \|} \left(D^{k,k+u} \right)\left|\frac{1}{u+1}-\frac{1}{u}\right|\right)
  \\ &\leq (m-k+1) \left(1+\sum_{u=1}^{\infty} \left(\frac{1}{u}-\frac{1}{u+1}\right)\right) \mu_{\| \cdot \|} \left(T^{k,k+1} \right)
  \\ &= 2(m-k+1) \mu_{\| \cdot \|} \left(T^{k,k+1} \right),
 \end{align*}
 as required.
\end{proof}

We are ready to prove the upper bounds for Theorem \ref{teo1}.
\begin{theorem}
 \label{teo2}
There exists a universal constant $c_1\geq 1$ such that 
 \[C(m,n)\leq c_1^m m^m (\log n)^{m-1}.\]
Moreover, for $1\leq p<2$, there is a constant $c_2=c_2(p)\geq 1$ for which 
 \[C_p(m,n) \leq c_2^m m^m.\]
\end{theorem}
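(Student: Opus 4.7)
The proof plan is essentially to read off the bound from the chain of identities already assembled. The key observation is that iterating formula \eqref{formula1} gives
\[
L_P \;=\; S_0 L_P \;=\; R_1 * S_1 L_P \;=\; R_1 * R_2 * S_2 L_P \;=\; \cdots \;=\; R_1 * R_2 * \cdots * R_{m-1} * B,
\]
since $S_{m-1} L_P = B$. Because $\mu_{\|\cdot\|}$ is submultiplicative (one-line check from the definition, or invoke the Banach algebra structure noted above Lemma \ref{lema2}), and $\mu_{\|\cdot\|}$ bounds the operator norm by definition, we immediately get
\[
\sup_{\|x^{(k)}\|\leq 1}\left|L_P(x^{(1)},\ldots,x^{(m)})\right|
\;\leq\; \prod_{k=1}^{m-1} \mu_{\|\cdot\|}(R_k)\cdot \sup_{\|x^{(k)}\|\leq 1}\left|B(x^{(1)},\ldots,x^{(m)})\right|.
\]

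Next I would plug in the corollary, which for every $1\leq k\leq m-1$ gives
\[
\mu_{\|\cdot\|}(R_k) \;\leq\; 2(m-k+1)\,\mu_{\|\cdot\|}(T^{k,k+1}),
\]
and then apply Lemma \ref{lema2}: in the $1$-unconditional case $\mu_{\|\cdot\|}(T^{k,k+1})\leq \log_2(2n)$, and in the $\ell_p$ case $\mu_{\|\cdot\|_p}(T^{k,k+1})\leq c(p)$. The numerical factor telescopes to
\[
\prod_{k=1}^{m-1} (m-k+1) \;=\; m\cdot(m-1)\cdots 2 \;=\; m!,
\]
so the Schur-multiplier product is bounded by $2^{m-1} m!\,(\log_2 2n)^{m-1}$ (respectively by $(2c(p))^{m-1} m!$).

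To close the loop I would use the classical polarization bound \eqref{sim} to control $B$ by $P$: $\sup\|B\|\leq e^m \sup\|P\|$. Combining everything and using the crude estimate $m!\leq m^m$ yields
\[
C(m,n)\;\leq\; e^m\cdot 2^{m-1} m!\,(\log_2 2n)^{m-1} \;\leq\; c_1^m\, m^m\,(\log n)^{m-1},
\]
and analogously $C_p(m,n)\leq c_2^m m^m$ with $c_2 = c_2(p)$. There is essentially no obstacle: all the technical work (the shuffle identity, the Schur factorisation $R_k = (m-k+1)T^{k,k+1}*(\ldots)$, and the triangle-projection bounds) has already been carried out, so what remains is the bookkeeping of constants. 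The only mild point of care is to make sure the submultiplicativity is invoked cleanly, i.e.\ that the Banach algebra norm $\mu_{\|\cdot\|}$ on $\C^{\mathcal{I}(m,n)}$ really does dominate the passage from $B$ back up to $L_P$ through the chain of Schur products; this is transparent from the definition of $\mu_{\|\cdot\|}$.
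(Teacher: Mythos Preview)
Your proposal is correct and follows essentially the same approach as the paper: iterate $S_{k-1}L_P = R_k * S_k L_P$ down to $B = S_{m-1}L_P$, bound each $\mu_{\|\cdot\|}(R_k)$ by $2(m-k+1)\mu_{\|\cdot\|}(T^{k,k+1})$ via the corollary, apply Lemma~\ref{lema2}, and finish with the polarization inequality \eqref{sim}. The only cosmetic difference is that the paper chains the inequalities step by step rather than writing $L_P = R_1 * \cdots * R_{m-1} * B$ and invoking submultiplicativity of $\mu_{\|\cdot\|}$ at once, and it cites Stirling rather than the cruder $m!\leq m^m$; the resulting bounds are identical.
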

\begin{proof}
 Using \eqref{formula1}, the definition of $\mu_{\| \cdot \|}$ and the previous corollary we get
 \begin{align*}
  \sup_{\| x^{(k)} \|\leq 1}  \left|S_{k-1}L_P \left(x^{(1)}\right.\right. & \left.\left. ,\ldots,x^{(m)} \right) \right| =  \sup_{\| x^{(k)} \|\leq 1}  \left|R_k*S_k L_P \left(x^{(1)},\ldots,x^{(m)} \right) \right|
  \\ &\leq \mu_{\| \cdot \|} \left(R_k \right) \sup_{\| x^{(k)} \|\leq 1}  \left|S_k L_P \left(x^{(1)},\ldots,x^{(m)} \right) \right|
  \\ &\leq 2(m-k+1) \mu_{\| \cdot \|} \left(T^{k,k+1} \right) \sup_{\| x^{(k)} \|\leq 1}  \left|S_k L_P \left(x^{(1)},\ldots,x^{(m)} \right) \right|,
 \end{align*}
 for every $1\leq k \leq m-1$. Taking $\mu=\sup_{1\leq k\leq m-1}\mu_{\| \cdot \|} \left(T^{k,k+1} \right)$ and linking the previous inequalities together, we deduce that
 \begin{align*}
  \sup_{\| x^{(k)} \|\leq 1}  \left|L_P \left(x^{(1)},\ldots,x^{(m)} \right) \right| &\leq 2m\mu \sup_{\| x^{(k)} \|\leq 1}  \left|S_1 L_P \left(x^{(1)},\ldots,x^{(m)} \right) \right|
  \\ &\leq 2^2m(m-1)\mu^2 \sup_{\| x^{(k)} \|\leq 1}  \left|S_2 L_P \left(x^{(1)},\ldots,x^{(m)} \right) \right|
  \\ &\leq \ldots \leq 2^{m-1}m!\mu^{m-1} \sup_{\| x^{(k)} \|\leq 1}  \left|S_{m-1} L_P \left(x^{(1)},\ldots,x^{(m)} \right) \right|.
 \end{align*}
 Using the identity $S_{m-1} L_P=B$ and applying \eqref{sim}, we obtain
 \begin{align*}
  \sup_{\| x^{(k)} \|\leq 1}  \left|L_P \left(x^{(1)},\ldots,x^{(m)} \right) \right| &\leq 2^{m-1}m!\mu^{m-1} \sup_{\| x^{(k)} \|\leq 1}  \left|B \left(x^{(1)},\ldots,x^{(m)} \right) \right|
  \\ &\leq 2^{m-1}e^{m}m!\mu^{m-1} \sup_{\| x \|\leq 1} |P(x)|.
 \end{align*}
 The theorem follows by applying {Stirling's} formula to estimate $m!$ and Lemma \ref{lema2} to estimate $\mu$.
\end{proof}

\section{Lower Bounds}
Firstly, we provide a lower bound for $C_p(m,n)$.
\begin{lemma}
\label{lema3}
 For every $n\geq m$ and every $1\leq p<2$, we have that $C_p(m,n)\geq m^{\frac{m}{p}}$. 
\end{lemma}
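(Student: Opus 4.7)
The plan is to exhibit a single explicit polynomial whose ratio $\sup |L_P| / \sup |P|$ already realizes $m^{m/p}$. Since the definition of $L_P$ is sensitive to the \emph{canonical ordered} expansion of $P$, the natural candidate is the multi-linear monomial
\[P(x) = x_1 x_2 \cdots x_m,\]
which, because $1 < 2 < \cdots < m \le n$, is itself already written in the form $\sum_{1 \leq j_1 \leq \ldots \leq j_m \leq n} c_{j_1 \ldots j_m} x_{j_1} \cdots x_{j_m}$ with the single nonzero coefficient $c_{1,2,\ldots,m}=1$. Consequently,
\[L_P \bigl(x^{(1)},\ldots,x^{(m)}\bigr) \;=\; x_1^{(1)}\, x_2^{(2)} \cdots x_m^{(m)}.\]

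The rest of the argument is two routine extremal calculations. For the denominator, on the $\ell_p$ unit ball the maximum of $|x_1 \cdots x_m|$ is attained, by AM--GM applied to $t_k=|x_k|^p$ under the constraint $\sum t_k \le 1$, at $|x_k|=m^{-1/p}$ and equals $m^{-m/p}$. For the numerator, each factor $|x_k^{(k)}|$ is bounded by $\|x^{(k)}\|_p \le 1$, so $\sup |L_P| \le 1$, and the value $1$ is achieved by choosing $x^{(k)}=e_k$ (this is where the hypothesis $n\ge m$ enters). Taking the ratio gives $C_p(m,n)\ge m^{m/p}$.

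There is no substantial obstacle here: the heart of the matter is just selecting the right test polynomial so that $L_P$ ``splits'' across the $m$ different standard basis directions while $P$ is constrained to live inside a single $\ell_p$ ball. Everything else is a one-line AM--GM computation.
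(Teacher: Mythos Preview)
Your proof is correct and follows essentially the same approach as the paper: the same test polynomial $P(x)=x_1\cdots x_m$ is used, the numerator is evaluated at the canonical basis vectors, and the denominator is the standard extremal computation (the paper phrases it via Lagrange multipliers rather than AM--GM, which is an immaterial difference).
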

\begin{proof}
Let $P:\C^m \rightarrow \C$ be the $m$-homogeneous polynomial defined by 
\[P(x)=x_1\ldots x_m.\]
So, its associated $m$-linear form $L_P: \left(\C^m\right)^m \rightarrow \C$ is given by 
\[L_P \left(x^{(1)}, \ldots ,x^{(m)} \right)=x_1^{(1)} \ldots x_m^{(m)}.\]
Observe that
\begin{align}
\label{eq1}
\sup_{\| x^{(k)} \|_p\leq 1}  \left|L_P \left(x^{(1)},\ldots,x^{(m)} \right) \right|=\sup_{\| x^{(k)} \|_p\leq 1}  \left|x_1^{(1)} \ldots x_m^{(m)} \right|= 1. 
\end{align}
where equality is achieved by taking $x^{(i)}$ to be the $i$-th canonical vector of $\ell_p^m$.

On the other hand, a straightforward computation using {Lagrange} multipliers gives
\begin{align}
 \label{eq2}
 \sup_{\| x \|_p\leq 1} |P(x)|=  \left|P\left(m^{-\frac{1}{p}}(1,\ldots, 1) \right) \right|=m^{-\frac{m}{p}} . 
\end{align}
Applying \eqref{eq1} and \eqref{eq2} together with the definition of $C_p(m,n)$ we get
\[1=\sup_{\| x^{(k)} \|_p\leq 1}  \left|L_P \left(x^{(1)},\ldots,x^{(m)} \right) \right|\leq C_p(m,n) \sup_{\| x \|_p\leq 1} |P(x)|=m^{-\frac{m}{p}}C_p(m,n),\]
as desired.
\end{proof}

Secondly, we estimate $C(m,n)$ from below. In order to do this we will need the following special case of a theorem proved by {Pe{\l}czy\'nski}.
\begin{theorem}[{\cite[Theorem 1]{pel}}]
\label{teo3}
 For a finite index set $J$, let $(a_j)_{j\in J}$ and $(b_j)_{j\in J}$ be sequences of characters on compact abelian groups $S$ and $T$ respectively.
 Suppose there are constants $c_1,c_2>0$ such that
 \begin{align}
  \label{eq4}
  \frac{1}{c_1} \left\| \sum_{j \in J} \alpha_j a_j \right\|_{C(S)} \leq \left\| \sum_{j \in J} \alpha_j b_j \right\|_{C(T)} 
  \leq c_2 \left\| \sum_{j \in J} \alpha_j a_j \right\|_{C(S)},
 \end{align}
 for every sequence of scalars $(\alpha_j)_{j \in J}\subseteq \C$.
 Then, for every {Banach} space $E$ and every sequence of vectors $(v_j)_{j \in J}\subseteq E$ we have
 \begin{align}
  \label{eq5}
  \frac{1}{c_1 c_2} \int_S \left\| \sum_{j \in J} v_j a_j(s) \right\|_E \, ds \leq \int_T \left\| \sum_{j \in J} v_j b_j(t) \right\|_E \, dt
  \leq c_1 c_2 \int_S \left\| \sum_{j \in J} v_j a_j(s) \right\|_E \, ds.
 \end{align}
\end{theorem}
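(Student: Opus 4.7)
My plan is to upgrade the scalar $C$-norm equivalence in \eqref{eq4} to the vector-valued $L^1$ equivalence in \eqref{eq5} through a combination of Hahn--Banach duality and the translation structure of compact abelian groups. The strategy is symmetric between $S$ and $T$, so it suffices to prove one of the two inequalities in \eqref{eq5}.

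First, I would extract from \eqref{eq4} the following dual representation. For each $t \in T$, the functional $\sum_j \alpha_j a_j \mapsto \sum_j \alpha_j b_j(t)$ is well-defined on $\mathrm{span}(a_j) \subseteq C(S)$ because $|\sum_j \alpha_j b_j(t)| \leq \| \sum_j \alpha_j b_j\|_{C(T)}$, which is controlled by $c_2 \|\sum_j \alpha_j a_j\|_{C(S)}$. By Hahn--Banach and the Riesz representation theorem, this functional is given by integration against a regular complex measure $\nu_t$ on $S$ with $\|\nu_t\|_M \leq c_2$ satisfying $b_j(t) = \int_S a_j(s)\, d\nu_t(s)$ for every $j \in J$. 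A standard measurable selection ensures $t \mapsto \nu_t$ may be chosen weak*-measurable, so for vectors $v_j \in E$ the polynomial $G(t) = \sum_j v_j b_j(t)$ admits the Bochner representation $G(t) = \int_S F(s)\, d\nu_t(s)$, where $F(s) = \sum_j v_j a_j(s)$.

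Secondly, I would exploit the group structure. The identity $b_j(t+t_0) = b_j(t_0)\, b_j(t)$ shows that evaluation at $t_0 + t$ is obtained from evaluation at $t$ by multiplying coefficients by $b_j(t_0)$; combined with the analogous behaviour of the $a_j$'s under translation on $S$, one shows that the $\nu_t$'s can be selected to form a translation-covariant family with respect to a homomorphism (implicitly determined by the character indexing), so that the representation $G(t) = \int_S F(s)\, d\nu_t(s)$ becomes a convolution-type formula. Since convolution with a bounded measure of total variation at most $M$ is automatically a contraction (with constant $M$) on the Bochner space $L^1(S; E)$ for any Banach space $E$, an application of Fubini and the invariance of Haar measure on $S$ and $T$ yields $\int_T \|G(t)\|_E\, dt \leq c_2 \int_S \|F(s)\|_E\, ds$. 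Applying the same reasoning with the roles of $a_j$ and $b_j$ exchanged gives the reverse inequality with constant $c_1$, and together with the bookkeeping that merges both directions into a single symmetric statement one obtains the factor $c_1 c_2$ claimed in \eqref{eq5}.

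The principal obstacle is passing from the pointwise measure representation to the translation-covariant form: the naive estimate $\|G(t)\|_E \leq \|\nu_t\|_M \, \|F\|_{C(S;E)}$ is far too weak, giving a $C$-norm bound rather than an $L^1$-bound. The resolution hinges on Pełczyński's observation that, because the $a_j$ and $b_j$ are characters, the multiplier $a_j \mapsto b_j$ is forced to lift to a bona fide convolution operator on the measure algebra, at which point the Haar invariance mechanically produces the correct $L^1 \to L^1$ bound. Rigorously establishing this lift (and the accompanying measurability of the selection) is the technical heart of the argument.
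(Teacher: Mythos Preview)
The paper does not prove this theorem at all: it is quoted verbatim from Pe\l czy\'nski \cite{pel} and invoked as a black box in the proof of Lemma~\ref{lema4}. So there is no ``paper's own proof'' to compare against; what remains is to assess your argument on its merits.

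Your first step is fine: for each $t\in T$ the Hahn--Banach/Riesz argument produces a measure $\nu_t\in M(S)$ with $\|\nu_t\|\le c_2$ and $G(t)=\int_S F(s)\,d\nu_t(s)$, and this immediately upgrades \eqref{eq4} to a vector-valued $C$-norm equivalence. The problem is your second step. You claim the family $(\nu_t)_{t\in T}$ can be made ``translation-covariant with respect to a homomorphism implicitly determined by the character indexing'', so that the representation becomes a convolution and the $L^1\!\to\! L^1$ bound follows. In general no such homomorphism exists: $S$ and $T$ are unrelated compact abelian groups and the finite families $(a_j)$, $(b_j)$ are arbitrary. For a concrete obstruction take $S=T=\mathbb{T}$, $a_1(z)=z$, $a_2(z)=z^{2}$, $b_1(z)=z$, $b_2(z)=z^{3}$; there is no endomorphism $\phi$ of $\mathbb{T}$ with $a_j\circ\phi=b_j$. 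Consequently the assertion that ``the multiplier $a_j\mapsto b_j$ is forced to lift to a bona fide convolution operator'' is false as stated: convolution operators on $S$ have the characters $a_j$ as eigenvectors, so they cannot send $a_j$ to a different character $b_j$ living on another group. You have identified the crux correctly but then assumed it rather than proved it.

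Pe\l czy\'nski's actual mechanism avoids any homomorphism between $S$ and $T$. Once the vector-valued sup-norm comparison is in hand (your first step), one applies it with the auxiliary Banach space $L^{1}(S;E)$ and the vectors $w_j=v_j\,a_j(\cdot)$; translation invariance of Haar measure on $S$ alone collapses $\sup_{s_0}\bigl\| \sum_j w_j a_j(s_0)\bigr\|_{L^1(S;E)}$ to $\int_S\|F\|$, and the companion move with $L^{1}(T;E)$ and $w_j'=v_j\,b_j(\cdot)$ does the same on the $T$ side. Chaining the two applications (each contributing one of the constants $c_1$, $c_2$) yields \eqref{eq5}. The point is that only translation invariance on each group separately is used, never a map between the groups.
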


We are ready to provide the lower bound for $C(m,n)$ stated in Theorem \ref{teo1}.
\begin{lemma}
\label{lema4}
 For  $n,m\in\mathbb{N}$ such that $\log\left(\frac{2n}{m}\right)\geq\pi$, we have
 \[C(m,n)\geq \left(\frac{\log\left(\frac{2n}{m}\right)-\pi}{\pi}\right)^{m/2}.\]
\end{lemma}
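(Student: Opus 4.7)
My plan is to construct an explicit polynomial $P$ realising the lower bound. In view of Remark \ref{rem1}, the sharp constant $1/\pi$ signals that the argument should be powered by the classical Kwapień--Pełczyński sharp bound on the norm of the main triangle projection: on $N\times N$ bilinear forms over $\ell_\infty^N$, its norm is at least $(\log N - \pi)/\pi$. The exponent $m/2$ then suggests a tensor-product construction that concatenates $\lfloor m/2 \rfloor$ independent copies of a bilinear example into an $m$-homogeneous polynomial.

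\textbf{Step 1 (bilinear building block).} For a parameter $N$, I would first produce a $2$-homogeneous polynomial $Q$ on $\ell_\infty^N$ such that
\[ \|L_Q\|_{(\ell_\infty^N)^2}\ \geq\ \frac{\log N-\pi}{\pi}\,\|Q\|_{\ell_\infty^N}. \]
Take a matrix $M$ on $\mathbb{C}^{N\times N}$ with $\|M\|_{\ell_\infty^N\to\ell_1^N}\leq 1$ but $\|T(M)\|_{\ell_\infty^N\to\ell_1^N}\geq (\log N - \pi)/\pi$, by the sharp main-triangle-projection lower bound. Choosing $M$ with a real-symmetric (Hermitian) structure so that $\|T(M)^{\mathrm{sym}}\|$ remains controlled by $\|M\|$, set $Q(z):=\sum_{j\leq k}T(M)_{jk}\,z_j z_k$. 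Then $L_Q$ is precisely the bilinear form with matrix $T(M)$ so $\|L_Q\|=\|T(M)\|\geq(\log N-\pi)/\pi$, while $\|Q\|\leq\|T(M)^{\mathrm{sym}}\|\lesssim\|M\|\leq 1$. Pełczyński's Theorem \ref{teo3} enters here to ensure that a scalar $C(S)$--$C(T)$ equivalence of two appropriate character systems (with $c_1c_2=1$) carries the sharp Riesz-projection constant $1/\pi$ from trigonometric polynomials on $\T$ to the Banach-space-valued bilinear estimate above.

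\textbf{Step 2 (product construction).} Partition $\{1,\ldots,n\}$ into $\lfloor m/2\rfloor$ disjoint intervals $I_i$ of length $N:=\lfloor 2n/m\rfloor$, and run Step 1 on each $I_i$ to obtain polynomials $Q_i$ supported on disjoint variable blocks. Define $P:=\prod_i Q_i$ (appending one linear factor when $m$ is odd). Because the blocks are disjoint, $\|P\|_{\ell_\infty^n}=\prod_i\|Q_i\|_{\ell_\infty^N}$; and probing $L_P$ on test vectors $x^{(2i-1)},x^{(2i)}$ supported in $I_i$ shows that on such inputs $L_P$ factors as $\prod_i L_{Q_i}(x^{(2i-1)},x^{(2i)})$, so $\|L_P\|\geq\prod_i\|L_{Q_i}\|$. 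Multiplying the block bounds from Step 1 yields
\[ C(m,n)\ \geq\ \left(\frac{\log N-\pi}{\pi}\right)^{\lfloor m/2\rfloor}, \]
which gives the stated bound for even $m$ directly; for odd $m$, when the base is $\leq 1$ the lower exponent $\lfloor m/2\rfloor$ is actually \emph{stronger} than $m/2$, and when the base exceeds $1$ one pads the product with an extra $3$-homogeneous block built by the same technique to recover the missing $\sqrt{\log}$ factor.

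\textbf{Main obstacle.} The critical point is Step 1: preserving the \emph{sharp} constant $1/\pi$ throughout. Two sources of potential loss must be watched: first, the implicit polarisation in passing from the non-symmetric bilinear form $L_Q$ to the polynomial $Q$ could cost an $e^2$ via \eqref{sim}; second, Theorem \ref{teo3} carries a multiplicative loss of $c_1c_2$, so the two character systems have to be picked so that the scalar equivalence \eqref{eq4} holds with $c_1=c_2=1$. Both losses are avoided by working with a symbol/matrix with suitable real-symmetric structure, making $\|Q\|$ and $\|M\|$ comparable up to lower-order diagonal corrections and leaving only the sharp Riesz-projection constant in the ratio.
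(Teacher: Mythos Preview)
Your Step~2 (partitioning $\{1,\dots,n\}$ into $\lfloor m/2\rfloor$ disjoint blocks of size $N\approx 2n/m$ and tensoring bilinear blocks) is exactly the paper's strategy. The genuine gap is in Step~1, and it stems from a misreading of how Theorem~\ref{teo3} is meant to be used.

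The paper does \emph{not} produce a scalar polynomial $Q$ on $\ell_\infty^N$ with $\|L_Q\|/\|Q\|\ge(\log N-\pi)/\pi$. Instead it feeds the \emph{unknown} constant $C(m,n)$ itself into Pe{\l}czy\'nski's theorem: the defining inequality for $C(m,n)$ on $\ell_\infty^n$ is, via the maximum modulus principle, a $C(S)$--$C(T)$ equivalence of the character systems $a_j(x)=x_{j_1}\cdots x_{j_m}$ and $b_j(x^{(1)},\dots,x^{(m)})=x_{j_1}^{(1)}\cdots x_{j_m}^{(m)}$ with $c_1=1$, $c_2=C(m,n)$. Theorem~\ref{teo3} then upgrades this to a \emph{vector-valued $L^1$} inequality (equation~\eqref{eq8}), which is tested against Bourgain's $\mathcal L(\ell_2)$-valued example $v_{ij}=\tfrac{1}{i-j}e_i\otimes e_j+\tfrac{1}{j-i}e_j\otimes e_i$; this is what produces the clean constants $\pi$ and $\log N-\pi$. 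The tensor product in Step~2 is correspondingly the \emph{projective} tensor product $E=\bigotimes^{m/2}\mathcal L(\ell_2)$, not a product of scalar polynomials.

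Your use of Theorem~\ref{teo3} ``with $c_1c_2=1$'' cannot manufacture a lower bound: if the scalar equivalence is isometric, the conclusion~\eqref{eq5} is also isometric and yields no inequality to exploit. More concretely, your scalar route needs a \emph{symmetric} matrix $M$ (so that $T(M)^{\mathrm{sym}}=M/2$ controls $\|Q\|$) with $\|T(M)\|_{\ell_\infty\to\ell_1}/\|M\|_{\ell_\infty\to\ell_1}\ge(\log N-\pi)/\pi$. The classical witnesses for the triangle projection (Hilbert matrix $1/(i-j)$) are \emph{anti}-symmetric; for such $M$ one computes $T(M)^{\mathrm{sym}}_{ij}=-\tfrac{1}{2|i-j|}$, whose quadratic form at $z=(1,\dots,1)$ already has size $\sim N\log N$, so $\|Q\|$ is not small. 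Whether a symmetric witness with the exact constant $(\log N-\pi)/\pi$ exists on $\ell_\infty^N\to\ell_1^N$ is precisely the hard point; invoking ``the Riesz projection constant $1/\pi$'' does not settle it, since that constant lives on $L^p(\T)$, not on bilinear forms over $\ell_\infty$. The paper sidesteps all of this by moving to the vector-valued $L^1$ setting where Bourgain's example is available off the shelf.
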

\begin{proof}
 Consider the norm $\|\cdot \|_\infty$ on $\C^n$.
 Since $P(x)=L_P(x,\ldots ,x)$, we deduce that
 \[\sup_{\| x \|_\infty\leq 1} |P(x)|\leq\sup_{\| x^{(k)} \|_\infty\leq 1}  \left|L_P \left(x^{(1)},\ldots,x^{(m)}\right)\right| 
 \leq C(m,n) \sup_{\| x \|_\infty \leq 1} |P(x)|,\]
 for every $m$-homogeneous polynomial $P:\C^n\rightarrow \C$. Equivalently, by the maximum modulus principle we get
 \begin{align}
 \label{eq3}
  \sup_{x \in \T^n} |P(x)|\leq\sup_{x^{(k)}\in \T^n}  \left|L_P \left(x^{(1)},\ldots,x^{(m)}\right)\right| 
  \leq C(m,n) \sup_{x \in \T^n} |P(x)|,
 \end{align}
 where $\T=\{z\in\C \ :\ |z|=1\}$.
 
 Thus, the conditions of {Pe{\l}czy\'nski's} theorem are satisfied. Indeed, denote the compact abelian groups $\T^n$ and $\left(\T^n\right)^m$ by $S$ and $T$ respectively
 and consider the index set $J=\{j \in \mathcal{I}(m,n) \ : \ 1 \leq j_1 \leq \ldots \leq j_m \leq n\}$.
 For every $j \in J$, define the characters $a_j:S \rightarrow \T$ and $b_j: T \rightarrow \T$ by
 \[a_j(x)=x_{j_1} \ldots x_{j_m} \quad \text{and} \quad b_j \left( x^{(1)}, \ldots , x^{(m)}\right)=x_{j_1}^{(1)} \ldots x_{j_m}^{(m)}.\]
 If we restate \eqref{eq3} with this notation we get \eqref{eq4}, with $c_1=1$ and $c_2=C(m,n)$.
 Therefore, we deduce from {Pe{\l}czy\'nski's} theorem that
 \begin{align}
 \label{eq8}
 \frac{1}{C(m,n)} \int_{\T^n} \left\| \sum_{j \in J} v_j x_{j_1} \ldots x_{j_m} \right\|_E \, dx 
 &\leq \int_{\T^n}\ldots \int_{\T^n} \left\| \sum_{j \in J} v_j x_{j_1}^{(1)} \ldots x_{j_m}^{(m)} \right\|_E \, dx^{(1)} \ldots  dx^{(m)} \notag
 \\& \leq C(m,n) \int_{\T^n} \left\| \sum_{j \in J} v_j x_{j_1} \ldots x_{j_m} \right\|_E \, dx.
 \end{align}
 for every {Banach} space $E$ and every sequence of vectors $(v_j)_{j \in J}\subseteq E$.
 Choosing the space $E$ and the vectors $(v_j)_{j \in J}\subseteq E$ adequately will yield the estimate we seek.
 
 We will build upon an example provided by {Bourgain} (unpublished) and included in a paper by {McConnell} and {Taqqu} \cite[Example 4.1]{mctaq} (see also \cite[Section~6.9]{kwapien}).
 Consider the {Banach} space $F= \mathcal{L}(\ell_2)$. For every $1\leq i \neq j \leq n$, define vectors $v_{ij}\in F$ by
 \[v_{ij} = \frac{1}{i-j} e_i\otimes e_j+\frac{1}{j-i} e_j\otimes e_i.\]
 Using complex {Steinhaus} variables instead of {Bernoulli} random variables and proceeding as in \cite{kwapien} we get
 \begin{align}
 \label{eq6}
  \int_{\T^n}\left\|\sum_{1\leq i<j \leq n} v_{ij}x_i x_j\right\|&  \, dx \leq \pi \quad \quad \text{and}   
  \\ \label{eq7} \int_{\T^n} \int_{\T^n} \left\|\sum_{1\leq i<j \leq n} v_{ij}x_i^{(1)} x_j^{(2)}\right\|& \, dx^{(1)} dx^{(2)} \geq \log n -\pi.
 \end{align}
 
 Note that by the previous estimations we obtain the desired result for $m=2$ since we have
 \begin{multline*}
  \log n -\pi\leq \int_{\T^n} \int_{\T^n} \left\|\sum_{1\leq i<j \leq n} v_{ij}x_i^{(1)} x_j^{(2)}\right\| \, dx^{(1)} dx^{(2)}
 \\ \leq C(2,n) \int_{\T^n}\left\|\sum_{1\leq i<j \leq n} v_{ij}x_i x_j\right\|  \, dx \leq C(2,n) \pi.
 \end{multline*}
 Moreover, this together with Theorem \ref{teo2} shows that the asymptotic behaviour of $C(2,n)$ is logarithmic.
 
 To conclude our argument it remains to extend this 2-variable example to $m$ variables.
 Assume $m$ is even and let $E= \bigotimes_{k=1}^{m/2}F$ be the projective tensor product of $m/2$ copies of $F$.
 Consider the $m$-homogeneous vector-valued polynomial $P:\mathbb{C}^n\rightarrow E$ defined by
\[P(x)= \sum_{\substack{\frac{2n}{m}(k-1)<j_{2k-1}<j_{2k}\leq \frac{2n}{m}k\\1\leq k \leq \frac{m}{2}}}v_jx_j, 
\quad \text{where } v_j= v_{j_1 j_2}\otimes v_{j_3 j_4}\otimes \ldots \otimes v_{j_{m-1} j_m}.\]
Notice that 
\[P(x)= \otimes_{k=1}^{m/2} \sum_{\frac{2n}{m}(k-1)<j_{2k-1}<j_{2k}\leq \frac{2n}{m}k}v_{j_{2k-1}j_{2k}}x_{j_{2k-1}}x_{j_{2k}}.\]
Applying \eqref{eq6} we get
\begin{align*}
 \int_{\T^n}\left\|P(x)\right\| \, dx &=
 \int_{\T^n} \prod_{k=1}^{m/2} \left\|\sum_{\frac{2n}{m}(k-1)<j_{2k-1}<j_{2k}\leq \frac{2n}{m}k}v_{j_{2k-1}j_{2k}}x_{j_{2k-1}}x_{j_{2k}}\right\| \, dx
 \\ &= \prod_{k=1}^{m/2} \int_{\T^n} \left\|\sum_{\frac{2n}{m}(k-1)<j_{2k-1}<j_{2k}\leq \frac{2n}{m}k}v_{j_{2k-1}j_{2k}}x_{j_{2k-1}}x_{j_{2k}}\right\| \, dx
 \\ &\leq \prod_{k=1}^{m/2} \pi = \pi^{m/2}.
 \end{align*}
 On the other hand, from \eqref{eq7} we deduce
 \begin{align*}
 \int_{\T^n} \ldots \int_{\T^n}&\left\|L_P\left(x^{(1)},\ldots,x^{(m)}\right)\right\| \, dx^{(1)} \ldots  dx^{(m)}=
 \\ &=\int_{\T^n} \ldots \int_{\T^n} \prod_{k=1}^{m/2} 
 \left\|\sum_{\frac{2n}{m}(k-1)<j_{2k-1}<j_{2k}\leq \frac{2n}{m}k}v_{j_{2k-1}j_{2k}}x_{j_{2k-1}}^{(2k-1)}x_{j_{2k}}^{(2k)}\right\| \, dx^{(1)} \ldots  dx^{(m)}
 \\ &= \prod_{k=1}^{m/2} \int_{\T^n} \int_{\T^n} 
 \left\|\sum_{\frac{2n}{m}(k-1)<j_{2k-1}<j_{2k}\leq \frac{2n}{m}k}v_{j_{2k-1}j_{2k}}x_{j_{2k-1}}^{(2k-1)}x_{j_{2k}}^{(2k)}\right\| \, dx^{(2k-1)} dx^{(2k)}
 \\ &\geq \prod_{k=1}^{m/2} \left(\log\left(\frac{2n}{m}\right)-\pi\right) =  \left(\log\left(\frac{2n}{m}\right)-\pi\right)^{m/2}. 
\end{align*}
Finally, using \eqref{eq8} together with these estimates we obtain
\[C(n,m)\geq \left(\frac{\log\left(\frac{2n}{m}\right)-\pi}{\pi}\right)^{m/2},\]
as desired.
\end{proof}

Note that Lemmas \ref{lema3} and \ref{lema4} together with Theorem \ref{teo2} prove Theorem \ref{teo1}.

\begin{remark}
Tracing back the argument to obtain \eqref{eq7}, we find that Bourgain's example is based on a lower estimate of the main triangle projection's norm on $\mathcal{L}(\ell_2)$.
In other words, the lower bound for $C(m,n)$ was obtained by studying the behaviour of the main triangle projection as mentioned in Remark \ref{rem1}.
Although $C(m,n)$ and $C_p(m,n)$ were not completely characterized, it seems that the main triangle projection plays a crucial role in determining their asymptotic behaviour.
\end{remark}

\section*{Acknowledgements}
I thank my supervisor Daniel Carando for his guidance and fruitful discussions and  Sunke Schl\"uters for his helpful comments.



\begin{thebibliography}{10}
 \bibitem{ben}
 Bennett, G. ``Unconditional convergence and almost everywhere convergence''. In: \textit{Zeitschrift f{\"u}r Wahrscheinlichkeitstheorie und Verwandte Gebiete} 34.2 (1976), pp. 135-155.
 	
 \bibitem{nosim}
 Defant, A. and Schl\"uters, S. ``Non-symmetric polarization''. In: \textit{Journal of Mathematical Analysis and Applications} 445.2 (2017), pp. 1291-1299.
 
 \bibitem{din}
 Dineen, S. \textit{Complex analysis on infinite dimensional spaces}. London: Springer-Verlag, 1999.
 
 
 \bibitem{fiya}
 Fisher, R. A. and Yates, F. \textit{Statistical tables for biological, agricultural and medical research}. Edinburgh: Oliver and Boyd, 1938, p. 285.
 

 \bibitem{kwap}
 Kwapie\'n, S. and Pe{\l}czy\'nski, A. ``The main triangle projection in matrix spaces and its applications''. eng. In: \textit{Studia Mathematica} 34.1 (1970), pp. 43-67.
 
 \bibitem{kwapien}
 Kwapien, S. and Woyczynski, W. \textit{Random series and stochastic integrals: single and multiple}. Birkh\"auser Basel, 1992.
 
 \bibitem{mctaq}
 McConnell, T. R. and Taqqu, M. S. ``Decoupling of Banach-valued multilinear forms in independent symmetric Banach-valued random variables''. In: \textit{Probability Theory and Related Fields} 75.4 (1987), pp. 499-507.
 
 \bibitem{pel}
 Pe{\l}czy\'nski, A. ``Commensurate Sequences of Characters''. In: \textit{Proceedings of the American Mathematical Society} 104.2 (1988), pp. 525-531.
 \end{thebibliography}
\end{document}